\numberwithin{equation}{section}
\newcommand{\R}{\mathbb{R}}
\newcommand{\Z}{\mathbb{Z}}
\newcommand{\be}{\begin{enumerate}}
\newcommand{\ee}{\end{enumerate}}
\newcommand{\ba}{\begin{eqnarray*}}
\newcommand{\ea}{\end{eqnarray*}}
\newtheorem{thm}{Theorem}[section]
\newtheorem{cor}[thm]{Corollary}
\newtheorem{prop}[thm]{Proposition}
\theoremstyle{definition}
\newtheorem{dfn}[thm]{Definition}
\newtheorem{example}[thm]{Example}
\newtheorem{remark}[thm]{Remark}
\begin{document}

\title{Frobenius Polytopes}
\author[J. Collins]{John Collins}
\author[D. Perkinson]{David Perkinson}
\address{Reed College, Portland Oregon}
\date{1/21/04}

\begin{abstract}
A real representation of a finite group naturally determines a
polytope, generalizing the well-known Birkhoff polytope.  This paper
determines the structure of the polytope corresponding to
the natural permutation representation of a general Frobenius group.
\end{abstract}

\thanks{The authors would like to thank Rao Potluri for many useful
insights.  The second author would like to thank Reed College students
Judy Ridenour and Hana Steinkamp.}

\maketitle
\section{Introduction} 
The collection of $n\times n$ matrices over the real numbers is the
$n^2$-dimensional Euclidean space $\R^{n\times n}$.  Given a finite
group $G$ of real $n\times n$ matrices, the convex hull of its elements in
$\R^{n\times n}$ is a polytope $P(G)$ whose vertices are the group
elements.   A famous
example arises when $G$ is the collection of all $n\times n$
permutation matrices.  In that case, $P(G)$ is the Birkhoff polytope.
Much is known about the Birkhoff polytope (\cite{Brualdi1},
\cite{Brualdi2}, \cite{Brualdi3}, \cite{Brualdi4}) but there are still
open questions
(\cite{Pak}, \cite{Beck}); for instance, its volume is not known in
general.  Our interest in polytopes associated with groups was
inspired by \cite{Billera},  \cite{Brualdi0}, and \cite{Onn}.

In this paper, we consider the case of an important class of
permutation groups, the Frobenius groups.  In sections 1 and 2, we
recall basic facts concerning Frobenius groups and polytopes.  In
section 3, we establish our main result, Theorem~\ref{main-theorem},
identifying the polytope associated with a Frobenius group as a
free sum of simplices.

\section{Frobenius groups}
\begin{dfn} A group $G$ is a {\em Frobenius group} if it has a
proper subgroup $1<H<G$ such that $H\cap(xHx^{-1})=\{1\}$ for all $x\in G\setminus
H$.  The subgroup $H$ is called a {\em Frobenius complement}.
\end{dfn}
We recall some basic facts about Frobenius groups.
Our references are \cite{Alperin}, \cite{Dixon}, and \cite{Huppert}.

 Frobenius groups are precisely those which have representations as
transitive permutation groups which are not regular---meaning there is
at least one non-identity element with a fixed point---and for which
only the identity has more than one fixed point.  In that case, the
stablizer of any point may be taken as a Frobenius complement. On the
other hand, starting with an abstract Frobenius group with complement
$H$, the group $G$ acts on the collection of left-cosets $G/H$ via
left-multiplication. This gives a faithful permutation representation
of $G$ with the desired properties.  The Frobenius complement $H$ is
unique up to conjugation; hence the corresponding permutation
representation is unique up to isomorphism.

A theorem of Frobenius says that if $G$ is a finite Frobenius
group given as a permutation group, as above, the set consisting of
the identity of $G$ and those elements with no fixed points forms a
normal subgroup $N$.  The group $N$ is called the {\em Frobenius
kernel}. We have $G=NH$ with $N\cap H=1$, where $H$ is a Frobenius
complement.  Thus, $G$ is a semi-direct product $N\rtimes H$.
Conversely, if $N$ and $H$ are any two finite groups, and if $\phi$ is
a monomorphism of $H$ into the automorphism group of $N$ for which
each $\phi(h)$ is fixed-point free, then $N\rtimes_\phi H$ is a
Frobenius group with kernel $N$ and complement $H$.  A theorem of J.\
G. Thompson implies that $N$ is nilpotent.

\begin{example}  A few examples of Frobenius groups:
\be
\item The most familiar class of Frobenius groups is the collection of
odd dihedral groups,
\[
D_n=\langle \rho,\phi\mid\rho^n=\phi^2=1,
\rho\phi=\phi\rho^{n-1}\rangle,\ \mbox{$n$ odd},
\]
with Frobenius complement $H=\langle\phi\rangle$ and kernel
$N=\langle\rho\rangle$.  The permutation representation is the usual
group of symmetries of a regular $n$-gon.
\item The alternating group $A_4=\langle(123),(12)(34)\rangle$ is a
Frobenius group with complement $H=\langle(123)\rangle$ and kernel
$N=\langle(12)(34),(13)(24)\rangle$. 
\item Let $p$ and $q$ be prime numbers with
$p\equiv1\mod q$, and let $\phi$ be any monomorphism of $H:=\Z/q\Z$
into the automorphism group (i.e., the group of units) of $N:=\Z/p\Z$.
Then $N\rtimes_\phi H$ is a Frobenius group with complement $H$ and
kernel $N$.  Thus, the unique non-abelian group of size $pq$ is
Frobenius.
\ee  
\end{example}

\section{Polytopes}  Here we recall basic facts we need concerning
polytopes.  Our main reference is~\cite{Ziegler}.  The {\em convex
hull} of a subset $K\subseteq\R^n$ is the intersection of all convex
subsets of $\R^n$ containing $K$.  A {\em polytope} in $\R^n$ is the
convex hull of a finite set of points.  If the polytope $P$ is the
convex hull of points $X=\{p_1,\dots,p_t\}$, then $\dim P$, the {\em
dimension} of $P$, is the dimension of the affine span of $X$,
\[
\textstyle\operatorname{aff}(X):=\{x\in\R^n\mid x=\sum_{i=1}^ta_ip_i,\ a_i\in\R,\
\sum_{i=1}^ta_i=1\}.
\]
An {\em affine relation} on $X$ is an equation
$\sum_{i=1}^ta_ip_i=0$ with $\sum_{i=1}^ta_i=0$. Two such relations are
{\em independent} if their vectors of coefficients are linearly
independent.  If~$q$ is the number of independent affine relations on
$X$, then 
\begin{equation}\label{eq:dim}
\dim P=t-q-1.
\end{equation}
If there are no affine relations, then $P$ is called a {\em
$(t-1)$-simplex}.

A function of the form $A=A(x_1,\dots,x_n)=a_0+\sum_{i=1}^na_ix_i$
with $a_i\in\R$ for all $i$ is called {\em affine}.  The function
$A$ determines two {\em half-spaces}: $A\geq0$ and $A\leq0$.  It is
intuitively obvious, although not trivial to prove, that a set $P$ is
a polytope if and only if it is a compact set which is the
intersection of finitely many half-spaces.

Given a polytope $P\subset\R^n$, we say that $P$ {\em lies on one
side} of the affine function $A$ if $A(p)\geq0$ for all $p\in P$ or if
$A(p)\leq 0$ for all $p\in P$.  In that case, we define a {\em face}
of $P$ as the intersection $P\cap\{p\in\R^n\mid A(p)=0\}$.  The
{\em dimension} of the face is the dimension of its affine span.  The
empty set is the unique face of dimension $-1$.
A vertex is a face of dimension $0$, and a
{\em facet} is a face of dimension $\dim(P)-1$. The collection of
faces of $P$, ordered by inclusion, forms a lattice,
$\mathcal{F}(P)$.  The face lattice is determined by either the facets
or by the vertices in that every face is the intersection of the facets
containing it and is the convex hull of the vertices it contains.
Polytopes $P$ and $Q$ are {\em combinatorially equivalent} if their
face lattices are isomorphic as lattices.  The equivalence class of
$P$ under this relation is the {\em combinatorial type} of $P$.

Polytopes $P\subset\R^n$ and $Q\subset\R^m$ are {\em isomorphic},
denoted $P\approx Q$, if
there is an affine function $A\colon\R^n\to\R^m$, injective when
restriced to the affine span of $P$, such that $A(P)=Q$. Isomorphic
polytopes are combinatorially equivalent. 

We will need the following construction,~(cf.\ \cite{Henk}). Suppose $P$ and $Q$ are
polytopes in $\R^n$ whose relative interiors have nonempty
intersection.  Say
$x\in\operatorname{relint}(P)\cap\operatorname{relint}(Q)$. Further, suppose that the
linear spaces $\operatorname{aff}(P)-x$ and $\operatorname{aff}(Q)-x$ are orthogonal
(hence, $\operatorname{aff}(P)\cap\operatorname{aff}(Q)=\{x\}$).  Define the {\em free
sum}, $P\oplus Q$, to be the convex hull of $P\cup Q$.  The following
isomorphism of lattices is well-known:
\[
\mathcal{F}(P\oplus Q)\approx(\mathcal{F}(P)\times\mathcal{F}(Q))/\sim
\]
where $\sim$ connotes identification of
$(F_1,F_2)\in \mathcal{F}(P)\times\mathcal{F}(Q)$ with $(P,Q)$ if  either
$F_1=P$ or $F_2=Q$.  The lattice structure on the right-hand side has
$(P,Q)$ as the maximal element,  and if $F_1,F_1'$ are faces of $P$ not
equal to $P$ and $F_2,F_2'$ are faces of $Q$ not equal to $Q$, then
$(F_1,F_2)\leq (F_1',F_2')$ if $F_1\subseteq F_1'$ and $F_2\subseteq
F_2'$.  
If $F_1\neq P$ and $F_2\neq Q$, then the face of
$P\oplus Q$ corresponding to $(F_1,F_2)$ is the convex hull of
$F_1\cup F_2$ and has dimension $\dim F_1+\dim F_2+1$.  Otherwise,
$(F_1,F_2)$ corresponds to $P\oplus Q$, itself, which has dimension
$\dim P+\dim Q$.  This construction and identification of
lattices extends in an obvious way to the case of polytopes
$P_1,\dots, P_k$ in $\R^n$ sharing a point $x$ in their relative
interiors and such that their affine spans, when translated by $-x$,
are pairwise orthogonal.
 
A polytope is {\em simplicial} if all of its facets (hence all of its
proper faces) are simplices.  For example, an octahedron is
simplicial.

\section{Frobenius polytopes}
From now on, let $G$ be a finite Frobenius group with kernel $N$ and
complement $H$ acting as a permutation group on the left-cosets $G/H$
via left-multiplication. Our results apply to regular groups as well,
which for convenience we consider to be Frobenius groups with trivial
complement.  In any case, the elements of $N$ serve as a set of
representatives for the distinct cosets of $H$.  We fix a list
$\nu_1=1,\nu_2,\dots,\nu_n$ of the elements of $N$ and define an
action of $G$ on $[n]:=\{1,\dots,n\}$ as follows: for $g\in G$, define
$g(j)=i$ when $i,j\in[n]$ and $g\nu_jH=\nu_iH$.  In this way, we
identify $G$ with a subgroup of the symmetric group, $S_n$, and
identify $H$ with the stabilizer, $G_1$, of $1$ in $G$.

We further identify $G$ with a collection of $n\times n$ permutation
matrices.  The collection of all $n\times n$ real matrices is the
$n^2$-dimensional Euclidean space $\R^{n\times n}$ with coordinates
$\{x_{ij}\}$.  The value of $x_{ij}$ at any matrix $M$ is the $ij$-th
entry of $M$.  For $g\in G$, we take
\[
x_{ij}(g)=\begin{cases}
1&\text{if $g(j)=i$},\\ 
0&\text{if $g(j)\neq i$}.
\end{cases} 
\]
\begin{dfn}
The {\em Frobenius polytope} corresponding to $G$ is the convex hull
of $G\subset\R^{n\times n}$, denoted $P(G)$.
\end{dfn}

\begin{prop}\label{cosets-prop}
Let $G\subset\R^{n\times n}$ be a Frobenius group embedded in
Euclidean space as above.
\be
\item\label{first} $\sum_{g\in hN}g=\mathbf{1}$, where $\mathbf{1}$ is
the $n\times n$ matrix with each entry equal to $1$.
\item\label{second} If $\sum_{g\in G} a_gg=\mathbf{0}\in\R^{n\times n}$
for some $a_g\in\R$, then $a_g=a_{g'}$ for all $g,g'\in hN$.
\ee
\end{prop}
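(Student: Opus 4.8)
The plan is to prove the two parts by separate elementary arguments. For part~(\ref{first}), I would first note that $N$ acts regularly on $[n]$: for any $i,j$, a solution $m\in N$ of $m\nu_jH=\nu_iH$ forces $m\nu_j=\nu_i$, since $m\nu_j$ and $\nu_i$ both lie in $N$ and two elements of $N$ in the same $H$-coset coincide (their quotient lies in $H\cap N=\{1\}$); thus $m=\nu_i\nu_j^{-1}$ is the unique solution. Hence the $(i,j)$ entry of $\sum_{m\in N}m$ counts the single $m$ with $m(j)=i$, so $\sum_{m\in N}m=\mathbf{1}$. Writing each $g\in hN$ as $g=hm$ and multiplying permutation matrices, $\sum_{g\in hN}g=h\sum_{m\in N}m=h\,\mathbf{1}=\mathbf{1}$, the last step because left multiplication by a permutation matrix merely permutes the identical rows of $\mathbf{1}$.

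For part~(\ref{second}), suppose $\sum_{g\in G}a_gg=\mathbf{0}$. The first step is a reduction by averaging. For any $x\in G$ the identity $\sum_g a_g(xg)=x\sum_g a_gg=\mathbf{0}$ shows that $g\mapsto a_{x^{-1}g}$ again lies in the kernel, so the coset average $\bar a_g:=\frac{1}{|N|}\sum_{n\in N}a_{ng}$ (an average of translates by $n\in N$) also satisfies $\sum_g\bar a_gg=\mathbf{0}$, and by construction $\bar a$ is constant on each $N$-coset $gN=Ng$. It therefore suffices to show that $d:=a-\bar a$ vanishes; note $d$ satisfies $\sum_g d_gg=\mathbf{0}$ and $\sum_{g\in C}d_g=0$ for every $N$-coset $C$.

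The crux is a trace computation. Since $\sum_g d_gg=\mathbf{0}$, its Frobenius norm is zero, so, using $g^{\mathsf{T}}=g^{-1}$ for permutation matrices,
\[
0=\operatorname{tr}\Big(\big(\sum_{g}d_g g\big)\big(\sum_{g'}d_{g'}g'\big)^{\mathsf{T}}\Big)=\sum_{g,g'}d_gd_{g'}\operatorname{tr}(gg'^{-1}),
\]
where $\operatorname{tr}(x)$ is the number of fixed points of $x$ on $[n]$. Here the Frobenius hypothesis enters decisively: a nonidentity element has at most one fixed point, and it has none precisely when it lies in $N$, so $\operatorname{tr}(x)=n[x=1]+1-[x\in N]$. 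Substituting, and recalling that $gg'^{-1}\in N$ exactly when $g$ and $g'$ share an $N$-coset, the right-hand sum becomes $n\sum_g d_g^2+\big(\sum_g d_g\big)^2-\sum_C\big(\sum_{g\in C}d_g\big)^2$. The last two terms vanish because $d$ has zero sum on each coset, leaving $n\sum_g d_g^2=0$; hence $d=0$ and $a=\bar a$ is constant on every $N$-coset.

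I expect the main difficulty to be spotting the norm/trace identity and recognizing that fixed-point-freeness of $N\setminus\{1\}$ is exactly what makes $\operatorname{tr}(gg'^{-1})$ depend on $g,g'$ only through whether they lie in a common $N$-coset. That dependence is what lets the coset-sum-zero condition annihilate the entire off-diagonal contribution, isolating the positive-definite term $n\sum_g d_g^2$. Once that structure is seen, both the averaging reduction and the final cancellation are routine.
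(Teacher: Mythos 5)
Your proof is correct, and while part~(1) matches the paper, part~(2) takes a genuinely different route. For part~(1) you show $N$ acts simply transitively on $[n]$ and then translate by $h$; this is the paper's key property ($\star$) (exactly one element of each coset of $N$ sends $j$ to $i$), established for the coset $N$ and transported, so the two arguments coincide in substance. For part~(2) the paper fixes a reference element $g'\in hN$ and sums the coordinate relations $\sum_{g:\,g(j)=g'(j)}a_g=0$ over $j\in[n]$ --- which is precisely the inner product $\langle\sum_g a_g g,\ g'\rangle=0$ --- and, using the same fixed-point count you use ($\langle g,g'\rangle$ equals $n$, $0$, or $1$ according as $g=g'$, $g\in g'N\setminus\{g'\}$, or $g\notin g'N$), solves directly for $a_{g'}=-\tfrac{1}{n}\sum_{g\in G\setminus hN}a_g$, a quantity visibly depending only on the coset. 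You instead first center the coefficients by averaging over $N$-translates and then pair the relation with itself, so the same Gram-matrix structure yields $n\sum_g d_g^2=0$. Both arguments rest on the identical Frobenius input (only the identity fixes more than one point, and the fixed-point-free nonidentity elements are exactly $N\setminus\{1\}$), so the difference is one of organization: the paper's single pairing is shorter and needs no reduction step, while your version makes the positive-definiteness explicit and shows more transparently \emph{why} the coset sums are the only obstruction --- your computation effectively diagonalizes the Gram form of $G$ and identifies its kernel as the coefficient vectors summing to zero on each coset, which is exactly the space of affine relations the paper exploits later in its dimension count. One cosmetic point: you use $n$ both for $|N|$ and for a summation variable ranging over $N$ in the definition of $\bar a$; rename the latter to avoid confusion.
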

\begin{proof}
We first recall a basic property of Frobenius groups:
\begin{itemize}
\item[($\star$)] for all $i,j\in[n]$, there is precisely one element
$g$ in each coset of $N$ such that $g(j)=i$.
\end{itemize}
To see this, take $H$ as a set of coset representatives of $N$ in $G$
and consider the coset $hN$ with $h\in H$.  Given $i,j\in[n]$, we have
$(\nu_ih\nu_j^{-1})\nu_jH=\nu_iH$.  Since $N$ is
normal, there exists $\nu\in N$ such that
$\nu_ih\nu_j^{-1}=h\nu\in hN$, and $(h\nu)(j)=i$.
Suppose there is also $\nu'\in N$ such that $(h\nu')(j)=i$.  We then
have $h\nu'\nu_jH=h\nu\nu_jH$, whence
$(h\nu\nu_j)^{-1}(h\nu'\nu_j)\in H\cap N=\{1\}$.  Therefore, $\nu=\nu'$,
establishing ($\star$).

Assertion~(\ref{first}) follows immediately from ($\star$).  For each $i,j$ and coset
$hN$, we have $x_{ij}(\sum_{g\in hN}g)=\sharp\{g\in hN\mid
g(j)=i\}=1$. 

Now suppose $\sum_{g\in G}a_gg=0$, as in (\ref{second}).  For each
$i,j\in[n]$, applying the coordinate function $x_{ij}$, it follows
that $\sum_{g\in G: g(j)=i}a_g=0$.  Fix a coset $hN$ and an element
$g'\in hN$. For each $j$ we have $\sum_{g\in
G:g(j)=g'(j)}a_g=0$, and by ($\star$) there is precisely one element $g$
in each coset of $N$ such that $g(j)=g'(j)$.  Further, since no
element besides the identity has more than one fixed point, if
$g(j)=g'(j)$ and $g(j')=g'(j')$, it follows that $g=g'$ or $j=j'$.  Hence,
\begin{eqnarray*}
0&=&\sum_{j=1}^n\sum_{\genfrac{}{}{0pt}{2}{g\in G:}{g(j)=g'(j)}}a_g
=\sum_{j=1}^n\sum_{\genfrac{}{}{0pt}{2}{g\in hN:}{g(j)=g'(j)}}a_g
+\sum_{j=1}^n\sum_{\genfrac{}{}{0pt}{2}{g\in G\setminus hN:}{g(j)=g'(j)}}a_g\\
&=&na_{g'}+\sum_{g\in G\setminus hN}a_g.
\end{eqnarray*}
Solving for $a_{g'}$, we see that its value only depends on
$G\setminus hN$, and (\ref{second}) follows.
\end{proof}

\begin{cor} Let $P(N)$ denote the polytope which is the convex hull of the
Frobenius kernel, $N\subset\R^{n\times n}$.  Then $P(N)$ is a
simplex of dimension $|N|-1$.
\end{cor}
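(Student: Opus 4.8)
The plan is to establish that $P(N)$ is a $(|N|-1)$-simplex by verifying, through~\eqref{eq:dim}, that the $|N|$ permutation matrices in $N$ admit no nontrivial affine relation; in fact I will obtain the stronger statement that they are linearly independent. The key observation is that $N$ is itself a coset, namely $1\cdot N$ in the notation of Proposition~\ref{cosets-prop}, so the coset-wise equality of coefficients supplied by part~(\ref{second}) applies directly to any relation supported on $N$.

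First I would take an arbitrary affine relation $\sum_{g\in N}a_g\,g=\mathbf{0}$ with $\sum_{g\in N}a_g=0$ and extend it to a linear relation over all of $G$ by setting $a_g=0$ for every $g\in G\setminus N$. Part~(\ref{second}) of Proposition~\ref{cosets-prop} then forces the coefficients $a_g$ to be constant on each coset of $N$; applied to the coset $N=1\cdot N$, this shows that all coefficients indexed by $g\in N$ equal a single constant $c$.

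Next I would pin down $c$. Invoking part~(\ref{first}), the relation becomes $\sum_{g\in N}c\,g=c\,\mathbf{1}=\mathbf{0}$, and since $\mathbf{1}\neq\mathbf{0}$ we must have $c=0$, whence $a_g=0$ for every $g$. (Equivalently, one may skip part~(\ref{first}) and simply use the affine constraint $\sum_{g\in N}a_g=|N|\,c=0$ to conclude $c=0$.) Thus there is no nontrivial affine relation among the elements of $N$, so $q=0$ in~\eqref{eq:dim} and $\dim P(N)=|N|-1$; being the convex hull of $|N|$ affinely independent points, $P(N)$ is a simplex, as claimed.

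I do not anticipate a real obstacle here, since the essential content has already been packaged into Proposition~\ref{cosets-prop}. The only point demanding care is the bookkeeping that recognizes $N$ as the coset $1\cdot N$, so that part~(\ref{second}) collapses all coefficients on $N$ to a common value, together with the elementary remark that linear (hence affine) independence of the vertices is precisely what~\eqref{eq:dim} requires to certify the simplex structure.
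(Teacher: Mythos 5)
Your proof is correct and follows the same route as the paper: the paper's proof is the single remark that Proposition~\ref{cosets-prop} immediately implies the elements of $N$ are affinely independent, and your argument simply fills in the details of that implication (extend a relation on $N$ by zero, apply part~(\ref{second}) to the coset $1\cdot N$ to get a common coefficient, then kill it via part~(\ref{first}) or the affine constraint). The observation that the elements of $N$ are in fact linearly independent is a correct, slightly stronger byproduct, but the substance is identical to the paper's.
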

\begin{proof}
The proposition also immediately implies that the elements of $N$ are
affinely independent.
\end{proof}

In the following theorem, for each $h\in H$, let $P(hN)$ denote the
polytope which is the convex hull of the coset $hN\subset\R^{n\times n}$.
Matrix multiplication by $h$ defines a linear automorphism of
$\R^{n\times n}$ which is an isomorphism of $P(N)\approx P(hN)$. By
$P(N)^{\oplus|H|}$, we mean the convex hull of $|H|$ copies of $P(N)$
placed in pairwise orthogonal affine spaces so that the copies of
$P(N)$ meet at their barycenters (vertex average).
\begin{thm}\label{main-theorem}
A Frobenius polytope is a free sum of simplices:
\[
P(G)=\oplus_{h\in H}P(hN)\approx P(N)^{\oplus |H|}.
\]
\end{thm}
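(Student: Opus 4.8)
The plan is to establish the theorem in two stages. First I would show that the cosets $P(hN)$ for $h \in H$ share a common point in their relative interiors, and that after translating by this point their affine spans are pairwise orthogonal; this verifies the hypotheses of the free-sum construction recalled in Section 3, so that $P(G) = \bigoplus_{h \in H} P(hN)$. Second I would show $P(hN) \approx P(N)$ for each $h$, so that all the summands are simplices of dimension $|N|-1$, yielding the isomorphism $P(G) \approx P(N)^{\oplus|H|}$.

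For the common point, the natural candidate is the barycenter. By Proposition~\ref{cosets-prop}\eqref{first}, for every $h \in H$ we have $\sum_{g \in hN} g = \mathbf{1}$, so the barycenter of $P(hN)$ is $\frac{1}{|N|}\mathbf{1}$, independent of $h$. Since $\nu_1 = 1 \in N$ and $N$ is a simplex (by the Corollary), its barycenter lies in its relative interior, and the same holds for each isomorphic copy $P(hN)$. Thus $x := \frac{1}{|N|}\mathbf{1}$ lies in $\operatorname{relint}(P(hN))$ for all $h$, giving the required shared interior point.

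The crux is the orthogonality of the translated affine spans, and I expect this to be the main obstacle. Writing $U_h := \operatorname{aff}(P(hN)) - x$, I would observe that $U_h$ is spanned by the differences $\{g - g' : g, g' \in hN\}$, equivalently by vectors of the form $hn - h = h(n-1)$ with $n \in N$. The goal is to show $\langle u, v \rangle = 0$ for $u \in U_h$, $v \in U_{h'}$ with $h \neq h'$, using the standard inner product $\langle A, B\rangle = \sum_{i,j} A_{ij}B_{ij} = \operatorname{tr}(A^\top B)$ on $\R^{n \times n}$. The key computation is to evaluate $\langle g_1 - g_2, g_3 - g_4 \rangle$ for permutation matrices: since $\langle g, g' \rangle = \operatorname{tr}(g^\top g') = \sharp\{j : g(j) = g'(j)\}$ counts common values, and by the Frobenius fixed-point property $g^{-1}g'$ fixes at most one point unless $g = g'$, these inner products take only the values $n$ (when $g = g'$) or $0$ or $1$ otherwise. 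I would then sum these over the four elements of $hN$ and $h'N$; the affine-relation content of Proposition~\ref{cosets-prop}\eqref{second} should force the cross terms to cancel, leaving the spans orthogonal. Establishing this cancellation carefully is the heart of the argument.

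Finally, the isomorphism $P(hN) \approx P(N)$ is already noted before the theorem statement: left matrix multiplication by the permutation matrix $h$ is a linear map on $\R^{n \times n}$ sending $N$ bijectively to $hN$, hence an affine injection carrying $P(N)$ onto $P(hN)$. Because each $P(hN)$ is therefore a simplex of dimension $|N|-1$ and all share the barycenter $x$ with pairwise orthogonal translated spans, the extended free-sum construction of Section~3 identifies $P(G) = \bigoplus_{h \in H} P(hN)$ with $P(N)^{\oplus|H|}$, completing the proof. I would also remark that $P(G)$ is the full convex hull of $G = \bigsqcup_{h \in H} hN$, so that $\operatorname{conv}\bigl(\bigcup_h P(hN)\bigr) = P(G)$ matches the definition of the free sum as the convex hull of the union.
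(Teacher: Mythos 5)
Your overall architecture matches the paper's: same shared point $\tfrac{1}{|N|}\mathbf{1}$ via Proposition~\ref{cosets-prop}~(1), same reduction to orthogonality of the translated affine spans, same use of left multiplication by $h$ for $P(hN)\approx P(N)$. But the step you explicitly defer --- showing the cross terms cancel --- is the entire content of the theorem, and the tool you point at for it (the affine-relation statement of Proposition~\ref{cosets-prop}~(2)) is not what does the job. The issue is that you only establish that $\langle g,g'\rangle\in\{0,1\}$ for $g\neq g'$, and if these values could vary between $0$ and $1$ among the four cross-coset pairs, the sum $\langle g_1-g_2,\,g_3-g_4\rangle$ would not vanish. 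No amount of appealing to affine relations fixes this; what is needed is the sharper group-theoretic fact that the value $0$ \emph{cannot occur} across distinct cosets.

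Concretely: for $g=h\nu\in hN$ and $g'=h'\nu'\in h'N$ with $h\neq h'$, the element $\mu=\nu^{-1}h^{-1}h'\nu'$ is not the identity, so by the Frobenius property it has either exactly one fixed point or none. If it had none it would lie in the kernel $N$, and then normality of $N$ would force $h^{-1}h'\in H\cap N=\{1\}$, contradicting $h\neq h'$. Hence $\langle g,g'\rangle=1$ for \emph{every} such pair, and the cancellation $1-1-1+1=0$ is immediate (the paper runs this computation directly on $\langle h\nu-\tfrac{1}{|N|}\mathbf{1},\,h'\nu'-\tfrac{1}{|N|}\mathbf{1}\rangle$, using $\langle g,\tfrac{1}{|N|}\mathbf{1}\rangle=1$). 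Without this one observation your proposal is a correct outline with the keystone missing; with it, everything you wrote goes through.
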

\begin{proof}
By Proposition~\ref{cosets-prop}~\eqref{second}, we have $\sum_{g\in
hN}=\mathbf{1}$ for each $h\in H$.  Hence, $\tfrac{1}{|N|}\mathbf{1}$
is in the relative interior of each $P(hN)$.  Translating by this
vector, we must show that
$\{\operatorname{aff}(P(hN))-\tfrac{1}{|N|}\mathbf{1}\}_{h\in H}$
consist of pairwise orthogonal spaces.  To this end, let $h\nu\in hN$
and $h'\nu'\in h'N$ with $h\neq h'$.  We first show that the inner product of these two group elements as points in $\R^{n\times n}$ is $1$.  To say that $\langle h\nu,h'\nu'\rangle=1$ is the same
as saying that $h\nu(j)=h'\nu'(j)$ for precisely one $j$, i.e., that
$\mu:=\nu^{-1}h^{-1}h'\nu'$ has exactly one fixed point.  Since
$\mu\neq 1$ and $G$ is a Frobenius group, the only other possibility
is that $\mu$ has no fixed points and hence is an element of $N$.
However, this would imply that $h^{-1}h'\in H\cap N=\{1\}$ contrary to
the assumption that $h\neq h'$.

Orthogonality quickly
follows: 
\ba 
\langle h\nu-\tfrac{1}{|N|}\mathbf{1},h'\nu'-\tfrac{1}{|N|}\mathbf{1}\rangle
&=& \langle h\nu,h'\nu'\rangle-\langle h\nu,\tfrac{1}{|N|}\mathbf{1}\rangle
-\langle\tfrac{1}{|N|}\mathbf{1},h'\nu'\rangle 
+\langle\tfrac{1}{|N|}\mathbf{1},\tfrac{1}{|N|}\mathbf{1}\rangle\\[5pt] 
&=&1-1-1+1=0.  
\ea 
\end{proof}
We now summarize some immediate consequences of the theorem.
\begin{cor}\label{Frob-poly} Let $|N|=n$ and $|H|=h$.
\be
\item The polytope $P(G)$ is a simplicial polytope of dimension 
$|G|-|H|=(n-1)h$ with $|G|$ vertices and $n^h$ facets.  
\item The faces of $P(G)$ not equal to $P(G)$ itself are exactly the convex
hulls of subsets $X$ of $G$ omitting at least one element from each
coset of $N$. The dimension of the face corresponding to a subset $X$
is $|X|-1$.  
\item  The complement of any set of $h$
elements of $G$, one chosen from each of the cosets of $N$, forms the
set of vertices of a facet, and all facets arise in this way.
\item   
The number of faces of dimension $k$ in $P(G)$ is the coefficient of
$x^{k+1}$ in $x^{(n-1)h+1}+((1+x)^n-x^n)^h$.
\ee
\end{cor}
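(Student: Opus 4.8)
The plan is to read off all four assertions directly from the free-sum lattice isomorphism recorded in Section~3, applied to Theorem~\ref{main-theorem}. By that theorem $P(G)\approx P(N)^{\oplus|H|}$ is a free sum of $|H|=h$ copies of the simplex $P(N)$, each of dimension $n-1$ and hence with exactly $n$ vertices, the copies being the polytopes $P(hN)$ for $h\in H$. Under the lattice identification, every face of $P(G)$ corresponds to a tuple $(F_h)_{h\in H}$ with $F_h$ a face of $P(hN)$, and such a tuple is identified with the top face $P(G)$ precisely when $F_h$ equals the whole simplex $P(hN)$ for some $h$. Since every face of a simplex is the convex hull of a subset of its vertices, a proper face of $P(G)$ corresponds to a choice, in each coset $hN$, of a subset $S_h$ with $|S_h|\le n-1$; the associated face is the convex hull of $X:=\bigcup_{h\in H}S_h$.

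With this dictionary in hand, the first three assertions are immediate. Iterating the two-summand dimension formula $\dim(F_1,F_2)=\dim F_1+\dim F_2+1$ over the $|H|$ copies shows that the proper face indexed by $(S_h)_{h\in H}$ has dimension $\sum_{h\in H}(|S_h|-1)+(|H|-1)=|X|-1$, which gives the dimension statement; and the constraint $|S_h|\le n-1$ for every $h$ is exactly the condition that $X$ omit at least one element from each coset, yielding the description of the proper faces. The whole polytope therefore has dimension $(n-1)h=|G|-|H|$ with its $|G|$ group elements as vertices, and since every proper face is the convex hull of $|X|$ vertices in dimension $|X|-1$, each is a simplex and $P(G)$ is simplicial. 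A facet is a proper face of maximal dimension $(n-1)h-1$, forcing $|X|=|G|-h$ and hence $|S_h|=n-1$ for every $h$; thus a facet omits exactly one element from each coset, so facets are the complements of the transversals of the cosets and number $n^h$.

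For the face-enumeration statement I would pass to generating functions, marking each chosen subset $S_h$ by $x^{|S_h|}$. The proper faces of a single simplex contribute $\sum_{j=0}^{n-1}\binom{n}{j}x^{j}=(1+x)^n-x^n$, so the independent choices over the $h$ pairwise-orthogonal copies contribute the product $((1+x)^n-x^n)^h$, in which the coefficient of $x^{k+1}$ counts proper faces with $|X|=k+1$, that is, of dimension $k$. The only face omitted by this product is $P(G)$ itself, of dimension $(n-1)h$; since $(1+x)^n-x^n$ has degree $n-1$ and the product therefore has degree $(n-1)h$, there is no competing term in the top slot, and I restore the missing face as the summand $x^{(n-1)h+1}$. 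Adding the two contributions yields the stated generating function $x^{(n-1)h+1}+((1+x)^n-x^n)^h$.

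The binomial bookkeeping is routine; the only point that genuinely requires attention is the identification $\sim$, under which the raw product $\prod_{h\in H}\mathcal F(P(hN))$ collapses every tuple containing a maximal $F_h$ to the single top face of $P(G)$. This is exactly why one must exclude the top face from the product $((1+x)^n-x^n)^h$ and reintroduce it separately, and why the proper faces are governed by $|S_h|\le n-1$ rather than $|S_h|\le n$. Keeping this collapse straight is the crux; once it is handled, all four parts drop out of the lattice structure.
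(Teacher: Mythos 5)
Your proposal is correct and follows exactly the route the paper intends: the paper states the corollary as an immediate consequence of Theorem~\ref{main-theorem} together with the free-sum face-lattice isomorphism and dimension formula recorded in the polytopes section, and your argument simply writes out those details (including the careful handling of the collapse under $\sim$ and the separate $x^{(n-1)h+1}$ term for the top face). Nothing further is needed.
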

\begin{remark}
The dimension of $P(G)$ also follows immediately from
Proposition~\ref{cosets-prop}.  It implies that the affine relations
on the elements of $G$ are exactly the affine relations on $|H|$
copies of the matrix $\mathbf{1}$.  There are $|H|-1$ independent such
relations; so, $\dim P(G)=|G|-|H|$ (cf.\ (\ref{eq:dim})).

The fact that each element of $G$ is a vertex of $P(G)$ also follows
from a more general principle.  Multiplication by any element of $G$,
thought of as a permutation matrix, is a linear automorphism of
$\R^{n\times n}$ sending $P(G)$ to itself.  At least one element of
$G$ is a vertex, and since the action of $G$ on itself is transitive,
all elements must be vertices.  Therefore, there are $|G|$ vertices.
\end{remark}
\bibliographystyle{plain}
\bibliography{frob}

\end{document}